\theoremstyle{definition}
\newtheorem{Def}{Definition}[section]
\newtheorem{Bsp}[Def]{Example}
\newtheorem{Bem}[Def]{Remark}
\theoremstyle{plain}
\newtheorem{Thm}[Def]{Theorem}
\newtheorem*{Thm*}{Theorem}
\newtheorem{Lem}[Def]{Lemma}
\newtheorem{Kor}[Def]{Corollary}
\newtheorem*{Kor*}{Corollary}
\newtheorem*{con*}{Conjecture}
\newtheorem*{frag*}{Question}
\newtheorem*{verm*}{Vermutung}
\newcommand{\Mat}{\operatorname{Mat}}
\newcommand{\Id}{\operatorname{I}}
\newcommand{\Sym}{\operatorname{Sym}}
\newcommand{\Sper}{\operatorname{Sper}}
\newcommand{\Supp}{\operatorname{Supp}}
\newcommand{\Quot}{\operatorname{Quot}}
\newcommand{\adj}{\operatorname{adj}} %adjugate matrix
\newcommand{\fE}{{\mathfrak E}}
\newcommand{\fp}{{\mathfrak p}}
\newcommand{\Z}{{\mathbb Z}}
\title[Eigenvalues of symmetric matrices]{Eigenvalues of symmetric matrices over integral domains}
\author{Mario Kummer}
\address{Universit\"at Konstanz, Germany} 
\email{Mario.Kummer@uni-konstanz.de}
\thanks{The author was supported by the Studienstiftung des deutschen Volkes.}
\begin{document}

\subjclass[2010]{Primary: 11C20, 13J30}

\begin{abstract}
 Given an integral domain $A$ we consider the set of all integral elements over $A$ that can occur as an eigenvalue of a symmetric matrix over $A$. We give a sufficient criterion for being such an element. In the case where $A$ is the ring of integers of an algebraic number field this sufficient criterion is also necessary and we show that the size of matrices grows linearly in the degree of the element. The latter result settles a questions raised by Bass, Estes and Guralnick.
\end{abstract}
\maketitle
\section{Introduction}
Let $A$  be an integral domain with field of fractions $K=\Quot(A)$. Let $\bar{K}$ be the algebraic closure of $K$.
We consider the question of which elements $\theta \in \bar{K}$ are an eigenvalue of a symmetric
matrix over $A$. 
We denote this set by $\fE(A)$.
Obviously, $\fE(A)$ is a subset of the integral closure $\bar{A}$ of $A$ in $\bar{K}$.
It was shown in \cite{Bass94} that $\fE(A)$ is in fact a subring of $\bar{A}$.
Moreover, every $\theta\in \fE(A)$ must be \textit{totally real over} $A$: For every ring homomorphism $A \to R$ to a real closed field, 
every prime ideal of $A[\theta] \otimes_A R$ must be a real prime ideal, i.e. with formally real residue field. 
We denote by $A^{\rm Re}$ the set of all totally real elements over $A$ in $\bar{A}$.
Estes \cite{Estes} showed that $\fE(\Z)=\Z^{\rm Re}$.
Later Bass, Estes and Guralnick \cite{Bass94} showed that $\fE(A)=A^{\rm Re}$ in the case where $A$ is a Dedekind domain or when $-1$ is a sum of squares in $A$.

Let $A$ be any commutative ring with $1$. If $M$ is a symmetric matrix over $A$, then its characteristic polynomial has the property that its image under all homomorphisms from $A$ to a real closed field $R$ has all of its zeros in $R$. We call polynomials with this property \textit{real zero polynomials}. If furthermore for every such homomorphism all zeros of its image are simple, we call it a \textit{strict real zero polynomial}. We denote by $p(A)$ the \textit{Pythagoras number} of $A$, i.e. the smallest number $q$ such that every sum of squares in $A$ is a sum of $q$ squares in $A$. If no such number exists, let $p(A)=\infty$. Our main result is the following.
\begin{Thm*}
Let $f \in A[X]$ be a strict real zero polynomial of degree $n$. Then $f$ divides the characteristic polynomial of a symmetric matrix $M \in \Sym_r(A)$ with $r \leq (2p(A)+1)n$.
\end{Thm*}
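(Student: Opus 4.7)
The approach is to realize the companion action of $f$ inside a symmetric operator by means of the trace form on $A[X]/(f)$, using the Pythagoras number of $A$ to represent that form as a sum of squares over $A$.

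First I would set up the universal self-adjoint model. Put $B = A[X]/(f)$ and $\theta = X \bmod f$; multiplication $\mu_\theta \colon B \to B$ has characteristic polynomial $f$, and the trace pairing $T(x,y) = \operatorname{tr}_{B/A}(xy)$ is a symmetric $A$-bilinear form with respect to which $\mu_\theta$ is self-adjoint. In the basis $1,\theta,\ldots,\theta^{n-1}$, $T$ has Gram matrix equal to the Hankel matrix $H = (s_{i+j})_{0 \leq i,j < n}$ of power sums of the roots of $f$, and $\mu_\theta$ is represented by the companion matrix $C_f \in \operatorname{Mat}_n(A)$, with the self-adjointness relation $H C_f = C_f^T H$. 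Because $f$ is a strict real zero polynomial, after base change along any ring homomorphism $A \to R$ into a real closed field, $f$ splits into $n$ distinct linear factors in $R$ and $T \otimes_A R$ becomes the standard positive-definite form on $R^n$ in the basis of primitive idempotents. Hence $H$ is positive definite at every ordering of $A$.

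The main technical lemma I would establish is a sum-of-squares factorization $H = V^T V$ with $V \in A^{m \times n}$ and $m \leq p(A)\cdot n$. The plan is to argue by induction on $n$ via an iterative Schur-style diagonalization carried out \emph{within} $A$: at each step a totally positive element of $A$ appears in a diagonal position and, by definition of the Pythagoras number, can be expressed as a sum of at most $p(A)$ squares of elements of $A$, contributing $p(A)$ rows to $V$ per reduction step. The delicate point is that the reduction must avoid denominators; this is what forces the extra factor of $p(A)$ compared to a classical Cholesky decomposition over $\operatorname{Quot}(A)$. I expect this factorization, translating abstract universal positivity into an explicit sum-of-squares decomposition bounded in length by the Pythagoras number, to be the main obstacle of the proof.

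Finally, granted such $V$, I would assemble $M \in \Sym_r(A)$ as a symmetric block matrix of size $r = n + 2m \leq (2p(A)+1)n$, with blocks built from $V$, $VC_f$, and possibly $C_f$ itself (arranged symmetrically along the anti-diagonal pattern forced by symmetry), so that there exists a rank-$n$ submodule of $\operatorname{Quot}(A)^r$ invariant under $M$ and on which $M$ acts as $C_f$. The verification uses only the identities $V^T V = H$ and $HC_f = C_f^T H$; once the invariant subspace is exhibited, Cayley--Hamilton applied to the restriction yields $f \mid \det(X I_r - M)$. The doubling (two copies of $V$-sized blocks next to the $n$-dimensional companion block) is what produces the factor $2p(A)+1$ in the size bound.
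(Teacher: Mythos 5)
Your high-level strategy is the same as the paper's: realize $C_f$ as a self-adjoint operator for a positive-definite integral form attached to $A[X]/(f)$ (you take the trace form with Hankel Gram matrix $H$; the paper uses the B\'ezout matrix $B(f,f')$, which is congruent to it and equally serves the role), convert positivity into a bounded sum-of-squares expression via $p(A)$, and assemble a symmetric block matrix of size $(2p(A)+1)n$.

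However, there is a genuine gap at the heart of the argument: the claimed lemma ``$H = V^{t}V$ with $V \in A^{m\times n}$'' is false in general. Positive definiteness of $H$ on $\Sper A$ does not make $H$ a sum of squares over $A$; what one can prove (and what the paper's Lemma \ref{lem:matrixposaffin} and Theorem \ref{cor:positivstellensatz} actually deliver) is a matrix Positivstellensatz of the form $sH = \Id_n + Q^{t}Q$ for some multiplier $s\in\Sigma A^2$. Your plan to do an ``iterative Schur-style diagonalization within $A$, avoiding denominators'' cannot avoid this multiplier: Gram--Schmidt necessarily divides by pivots, and clearing those denominators is exactly what produces the $s$. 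The Pythagoras number only controls the length of each pivot's sum-of-squares representation, not the existence of a denominator-free factorization. Worse, the $\Id_n$ term in $sH=\Id_n+Q^{t}Q$ is not cosmetic but is essential for symmetry of the final block matrix: in the paper's construction one arrives at the upper-left block $C-Q^{t}QC^{t} = C + C^{t} - sBC^{t}$, and it is precisely the cancellation coming from $Q^{t}Q = sB - \Id_n$ that makes this block symmetric (using $BC^{t}=CB$). With your proposed ``$H=V^{t}V$'' one gets $C - V^{t}VC^{t} = C - HC^{t}$, whose transpose is $C^{t} - HC^{t}$, not equal to it. So without the Positivstellensatz multiplier and the identity term your block assembly does not yield a symmetric matrix. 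You would need to prove the matrix Positivstellensatz (this is where Krivine's Positivstellensatz enters, via the ideal $(\Sigma(M):A^n)$ not lying in any real prime) and then redo the block construction to absorb both $s$ and $\Id_n$.
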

If $A$ is an integral domain, this shows that every $\theta\in A^{\rm Re}$ which is a zero of a strict real zero polynomial is the eigenvalue of a symmetric matrix over $A$. Furthermore, if $A$ is the ring of integers of an algebraic number field, then the minimal polynomial of every $\theta \in A^{\rm Re}$ is a strict real zero polynomial. Therefore, we get the following corollary.
\begin{Kor*}
 Let $A$ be the ring of integers of an algebraic number field. Every $\theta\in A^{\rm Re}$ of degree $n$ over $A$ is an eigenvalue of a symmetric matrix over $A$ of size at most $(2p(A)+1)n$.
\end{Kor*}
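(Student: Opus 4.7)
The plan is to reduce the corollary directly to the main Theorem by checking that the minimal polynomial of $\theta$ is a strict real zero polynomial in $A[X]$. Once this is established, the Theorem immediately produces a symmetric matrix of size $\le (2p(A)+1)n$ whose characteristic polynomial is divisible by the minimal polynomial of $\theta$, and $\theta$ is then an eigenvalue.

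Let $K=\Quot(A)$ and let $f \in K[X]$ be the minimal polynomial of $\theta$, which has degree $n$. Since $A$ is the ring of integers of a number field, it is a Dedekind domain, hence integrally closed in $K$; because $\theta$ is integral over $A$, we get $f \in A[X]$ (and $f$ is monic). It remains to verify that $f$ satisfies the strict real zero condition, i.e.\ that for every ring homomorphism $\varphi\colon A \to R$ into a real closed field $R$, the image $\varphi(f) \in R[X]$ splits into distinct linear factors.

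The first observation is that every such $\varphi$ is injective. Indeed, the kernel $\fp$ is a prime ideal; if $\fp \neq 0$ then $\fp$ is maximal and $A/\fp$ is a finite field, which cannot be embedded into the characteristic zero field $R$. Hence $\varphi$ factors through an embedding $\psi\colon K \hookrightarrow R$, so in particular $\deg \varphi(f) = n$. Since $\theta \in A^{\rm Re}$, the definition of totally real applied to $\varphi$ tells us that every prime ideal of $A[\theta] \otimes_A R \cong R[X]/(\varphi(f))$ has formally real residue field; such residue fields correspond to irreducible factors of $\varphi(f)$ over $R$, and over a real closed field the irreducible polynomials are of degree $1$ or $2$, the latter having residue field $R(i)$ which is not formally real. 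Thus $\varphi(f)$ is a product of linear factors. Finally, $f$ is irreducible over $K$ and $K$ has characteristic zero, so $f$ is separable; therefore $\psi(f) = \varphi(f)$ has no repeated roots in $R$. This establishes that $f$ is a strict real zero polynomial of degree $n$ in $A[X]$, and applying the Theorem finishes the proof.

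The corollary is thus essentially a bookkeeping consequence of the Theorem; the only genuine content beyond invoking it is the translation from the totally real hypothesis on $\theta$ to the strict real zero property of the minimal polynomial, and this is easy precisely because the nonzero primes of $A$ are maximal with finite residue field, cutting out all homomorphisms $A\to R$ except those extending an embedding of the number field. I do not anticipate any obstacle here.
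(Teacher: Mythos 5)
Your proof is correct and follows essentially the same route as the paper: the paper proves an intermediate corollary stating that over an integral domain with no non-zero real prime, every irreducible real zero polynomial is automatically a strict real zero polynomial, and then applies it to the minimal polynomial of $\theta$; you simply inline that reasoning, observing that every $\varphi\colon A\to R$ to a real closed field must be injective (finite residue fields cannot embed in characteristic zero), so $\varphi(f)$ splits into distinct linear factors by the totally real hypothesis and separability of $f$ over $K$. No gap.
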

Note that if $A$ is the ring of integers of an algebraic number field, then $p(A)$ is always finite \cite{Scharlau}. In the case of $A=\Z$ one has $p(\Z)=4$ by Lagrange's Four-Square Theorem. Thus, the above result answers the following question raised by Bass, Estes and Guralnick to the affirmative with $c=9$.
\begin{frag*}[\cite{Bass94}]
 Is there a constant $c$ such that every $\theta\in \Z^{\rm Re}$ is an eigenvalue 
 of a symmetric integral matrix of size at most $c\cdot \deg_{\Z}(\theta)$?
\end{frag*} 
 The main ideas used in this work were motivated by the theory of definite determinantal representations of hyperbolic polynomials in which similar questions are considered, however always over the ring of real polynomials. For a survey on this topic we refer to \cite{victorsurvey}.
  
 \section{Preliminaries}
Let $A$ always be a commutative ring with $1$.
The set of sums of squares in $A$, i.e. elements of the form $a_1^2+\ldots+a_r^2$ with $a_i \in A$, is denoted by $\Sigma A^2$.
The set of matrices over $A$ of size $m\times n$ is denoted by $\Mat_{m,n}(A)$.
The set of symmetric matrices over $A$ of size $n$ is denoted by $\Sym_n(A)$.
 We will make use of the \textit{real spectrum} of a ring. Given a ring $A$ the real spectrum $\Sper A$ is the set
of all pairs $\alpha=(\mathfrak{p},P)$ where $\mathfrak{p}$ is a prime ideal of $A$ and $P$ is an ordering of the
residue field $\kappa(\mathfrak{p})$.
A prime ideal of $A$ is a \textit{real prime ideal} if the residue class field $\kappa(\mathfrak{p})$ has at least one ordering.
A field that has at least one ordering is called \textit{formally real}.
A field $K$ is formally real if and only if $-1$ is not a sum of squares in $K$ if and only if there is a homomorphism $K \to R$ to a real closed field $R$.
Let $\alpha=(\mathfrak{p},P) \in \Sper A$ and let $\rho_{A, \mathfrak{p}}: A \to \kappa(\mathfrak{p})$ be
the canonical homomorphism.
We denote by $\Supp(\alpha)=\mathfrak{p}$ the support of $\alpha$, i.e. the prime ideal corresponding to $\alpha$.
For any element $f \in A$ we say that $f(\alpha) \geq 0$, i.e. $f$ is nonnegative in $\alpha$,
if $\rho_{A, \mathfrak{p}}(f) \in P$.
We write $f(\alpha)>0$, i.e. $f$ is positive, if $f(\alpha)\geq 0$ and $f \not\in \Supp(\alpha)$.
We say that $f \in A$ is nonnegative (positive) on $\Sper A$ if it is nonnegative (positive) in every point of $\Sper A$.
We will need the following crucial result from real algebra cf. \cite[III \S 9, Thm. 4]{ERA}.
It is also known as Stengle's Positivstellensatz.
 \begin{Thm}[Krivine's Positivstellensatz]\label{thm:krivine}
  Let $a \in A$ be positive on $\Sper A$. Then there are $s,t \in \Sigma A^2$ such that
  $(1+s) a= 1+t$.
 \end{Thm}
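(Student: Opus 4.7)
My plan is to prove this Positivstellensatz-style result by contradiction, using Zorn's lemma and the correspondence between maximal proper preorderings and points of the real spectrum. Assume no $s, t \in \Sigma A^2$ satisfy $(1+s) a = 1+t$. Consider the preordering
\[
T = \{\sigma - \tau a : \sigma, \tau \in \Sigma A^2\} \subseteq A
\]
generated by $\Sigma A^2$ and the single element $-a$. A direct computation verifies closure under addition and multiplication using $a^2 \in \Sigma A^2$: indeed $(\sigma_1 - \tau_1 a)(\sigma_2 - \tau_2 a) = (\sigma_1\sigma_2 + \tau_1\tau_2 a^2) - (\sigma_1\tau_2 + \sigma_2\tau_1) a$, with both the constant part and the coefficient of $a$ lying in $\Sigma A^2$. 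So $T$ is a preordering containing $\Sigma A^2$.

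The crucial step is to show $-1 \notin T$. If $-1 = \sigma - \tau a$ for some $\sigma, \tau \in \Sigma A^2$, then $\tau a = 1 + \sigma$; from this \emph{a priori} weaker identity one must extract one of the specific form $(1+s) a = 1 + t$ in order to reach a contradiction. This is the main technical obstacle: the coefficient $\tau$ is only a sum of squares, not yet of the form $1 + (\text{sum of squares})$. The standard remedy is to localize $A$ at the multiplicative monoid $N = 1 + \Sigma A^2$, whose elements take values $\geq 1$ at every point of $\Sper A$ and are therefore units in the localization $B := N^{-1}A$, with $\Sper B \cong \Sper A$. Applying the weaker Positivstellensatz identity inside $B$ to the still-positive element $a$ and then clearing denominators through $N$ produces the desired coefficient $1+s$ on the left and $1+t$ on the right, contradicting the standing assumption.

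With $-1 \notin T$ established, Zorn's lemma yields a preordering $P \supseteq T$ maximal subject to $-1 \notin P$. By the classical theory of the real spectrum (cf.\ \cite[III \S 9]{ERA}), such a maximal $P$ is a prime cone: $\mathfrak{p} := P \cap (-P)$ is a prime ideal of $A$ and $P$ induces a total ordering on the residue field $\kappa(\mathfrak{p})$, defining a point $\alpha = (\mathfrak{p}, P) \in \Sper A$ at which every element of $T$ is nonnegative. Since $-a \in T \subseteq P$, we obtain $a(\alpha) \leq 0$, contradicting the hypothesis that $a$ is positive on $\Sper A$. This contradiction forces the existence of $s, t \in \Sigma A^2$ with $(1+s) a = 1+t$.
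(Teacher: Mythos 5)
The paper does not prove this statement: it is cited as a known result from \cite[III \S 9, Thm.\ 4]{ERA}, so there is no proof of record to compare against. Your proposal must therefore be judged on its own merits.

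The overall architecture is the classical one and is sound: form the preordering $T = \Sigma A^2 - a\,\Sigma A^2$, note that if $-1\notin T$ then Zorn's lemma produces a maximal preordering $P\supseteq T$, which is a prime cone and hence yields a point $\alpha\in\Sper A$ at which $-a\geq 0$, contradicting the hypothesis. Your verification that $T$ is a preordering is correct.

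The gap is in the step you yourself flag as the ``main technical obstacle,'' and the remedy you sketch does not close it. From $-1\in T$ you correctly obtain $\tau a = 1+\sigma$ with $\sigma,\tau\in\Sigma A^2$, and you now need an identity with coefficient $1+s$ rather than $\tau$. Your proposal is to localize at $N=1+\Sigma A^2$ and ``clear denominators,'' but this is described only at the level of a slogan; it is unclear what identity you are producing in $B=N^{-1}A$ or how clearing a denominator from $N$ would turn the bare coefficient $\tau$ into one of the form $1+s$. As written, the argument does not establish $-1\notin T$ (under the standing assumption), which is the load-bearing claim. In fact no detour through a localization is needed: from $\tau a = 1+\sigma$, set $s=\sigma+\tau$ and $t=\sigma+\tau a^2$, both in $\Sigma A^2$. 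Then
\[
(1+s)a = a + \sigma a + \tau a = (1+\sigma)a + (1+\sigma) = \tau a^2 + 1 + \sigma = 1 + t,
\]
which is exactly the required form. Inserting this two-line manipulation in place of the localization sketch completes your proof. (Equivalently, the whole argument can be run without a double contradiction: either $-1\in T$ and the manipulation above produces $s,t$ directly, or $-1\notin T$ and Zorn's lemma contradicts positivity of $a$; the second branch is vacuous.)
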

We refer to the book of Bochnak, Coste and Roy \cite{BCR} for proofs and more results on
the real spectrum. 

\begin{Def}
 Let $\varphi: A \to B$ be a finite ring homomorphism. We say that $\varphi$ is \textit{totally real}
 if for every homomorphism $A \to R$ to a real closed field $R$ every prime ideal of $B \otimes_A R$ is real.
 If $A$ is an integral domain, we say that $\theta\in \bar{A}$ is \textit{totally real over} $A$ if
 $A \to A[\theta]$ is totally real. The set of all elements that are totally real over $A$ in $\bar{A}$ is denoted by $A^{\rm Re}$.
\end{Def}

\begin{Bem}
 Let $K$ be a field and $\bar{K}$ its algebraic closure. Let $\theta \in \bar{K}$ and $d=[K[\theta]:K]$. The field extension $K\subseteq K[\theta]$ is totally real if and only if every ordering of $K$ has exactly $d$ distinct extensions to $K[\theta]$. This is equivalent to $\theta$ lying in every real closed subfield of $\bar{K}$. Thus, $K^{\rm Re}$ is the intersection of all real closed fields in $\bar{K}$. If $A$ is an integral domain, then it follows that $A^{\rm Re}$ is the set of all $\theta \in \bar{A}$ such that $\varphi(\theta)\in \Quot(\varphi(A))^{\rm Re}$ for every homomorphism from $\bar{A}$ to an algebraically closed field. Thus, the above definition for $A^{\rm Re}$ coincides with the definition from \cite{Bass94}. 
%  Let $A$ be an integral domain with field of fractions $K=\Quot A$. Let $\bar{K}$ be the algebraic closure of $K$. In \cite{Bass94} they define $K^{\rm re}$ to be the intersection of all real closed subfields of $\bar{K}$ that contain $K$ and $A^{\rm re}=\bar{A}\cap K^{\rm re}$. Then they define the set of totally real elements $A^{\rm Re}$ over $A$ to be set of all $\theta \in \bar{A}$ such that $\varphi(\theta)\in \varphi(A)^{\rm re}$ for every homomorphism from $\bar{A}$ to an algebraically closed field. 
%  Let $\theta\in\bar{A}$ be totally real over $A$ in the sense of \cite{Bass94}. 
\end{Bem}

 \begin{Def}
 A monic polynomial $f \in A[X]$ is called a \textit{real zero polynomial}
 if for all homomorphisms $A \to R$ to a real closed field $R$
 the image of $f$ under the induced map $A[X]\to R[X]$ has all of its zeros in $R$.
 If moreover all of them are simple zeros (for all $A \to R$), then $f$ is a \textit{strict  zero polynomial}.
\end{Def}

\begin{Bsp}
 The characteristic polynomial of any $M \in \Sym_n(A)$ is a real zero polynomial.
\end{Bsp}

\begin{Bem}
 For any real zero polynomial $f \in A[X]$ the homomorphism \[A \to A[X]/(f)\] is totally real.
 Thus, roots of real zero polynomials over integral domains are totally real.
\end{Bem}

\begin{Bsp}
 If $\Sper A = \emptyset$, then every monic polynomial $f \in A[X]$ is a strict real zero polynomial.
\end{Bsp}

\begin{Bsp}
 Let $R$ be a real closed field and $A=R[z_1,\ldots,z_n]$. The polynomial $f=X^2-(z_1^2+\ldots+z_n^2) \in A[X]$
is a real zero polynomial
 but not a strict real zero polynomial: Just consider the morphism $A\to R$ that sends every $z_i$ to zero.
\end{Bsp}

\begin{Def}
 A symmetric matrix $M \in \Sym_n(A)$ is \textit{positive \mbox{(semi-)}definite}
 if for every homomorphism $\varphi: A \to R$ to a real closed field $R$
 the matrix
 $\varphi(A)$ is positive \mbox{(semi-)}definite.
\end{Def}

\begin{Bem}
 Let $f \in A[X]$ be a monic polynomial and let $B=A[X]/(f)$. As an $A$-module $B$ is free of rank $n=\deg f$.
 An $A$-basis is given by the residue classes of $1, X, \ldots, X^{n-1}$. The \textit{companion matrix} of $f$ is the representing
 matrix $C$ of the $A$-linear map $B \to B, \, h \mapsto \bar{X} \cdot h$ with respect to this basis. Its characteristic polynomial is $f$.
Let $g \in A[X]$ be a polynomial of degree at most $n-1$. Letting $Y$ be a new variable
one can write \[
               \frac{f(Y)\cdot g(X)-f(X) \cdot g(Y)}{Y-X}=\sum_{1 \leq i,j\leq n} b_{i,j} Y^{i-1} X^{j-1}
              \]
where the $b_{i,j}$ are polynomial expressions in the coefficients of $f$ and $g$.
The \textit{B\'ezout matrix} is the matrix $B(f,g)=(b_{i,j})_{1 \leq i,j \leq n}$. The determinant of the B\'ezout matrix is the resultant of $f$ and $g$, i.e.
$\det B(f,g) \in \fp$ for some prime ideal $\fp$ of $A$ if and only if the residue classes of $f$ and $g$ in $\kappa(\fp)[X]$ have a common zero in the algebraic
closure of $\kappa(\fp)$. In particular, $B(f,1)$ is invertible. Moreover, it is not hard to see that the B\'ezout matrix is symmetric and satisfies
$B(f,g_1+g_2)=B(f,g_1)+B(f,g_2)$ as well as $C B(f,g)=B(f,g) C^{ t}$. The B\'ezout matrix $B(f,f')$ where $f'$ is the derivative of $f$ is positive semidefinite
if and only if $f$ is a real zero polynomial. It is positive definite if and only if $f$ is a strict real zero polynomial. For proofs of these classical known results
and more information
we refer to \cite[\S 2.2]{Krein81}.
\end{Bem}

\section{A Positivstellensatz}\label{sec:positivstellensatz}

In this section we prove a Positivstellensatz for matrices over a ring $A$ similar to Krivine's Positivstellensatz. Note that this is an ungraded version
of the Positivstellensatz proved and used in \cite{Kum} that holds over the graded ring of real polynomials. The proof is also very similar.

\begin{Def}
 A symmetric matrix $M \in \Sym_n(A)$ is a \textit{sum of squares} if there is a matrix $Q \in \Mat_{m,n} (A)$
 such that $M=Q^{t}  Q$.
\end{Def}

\begin{Bem}
 If $M \in \Sym_n(A)$ is a sum of squares and $a \in A$, then $a^2M$ is a sum of squares as well.
\end{Bem}

\begin{Bem}
 A symmetric matrix $M \in \Sym_n(A)$ is a sum of squares 
 if and only if there are (column) vectors $v_1,\ldots, v_m \in A^n$ such that
 \[
  M=v_1  v_1^{t}+\ldots+ v_m  v_m^{t}.
 \]
 Thus the sum of matrices that are a sum of squares is again a sum of squares.
 \end{Bem}
 
 \begin{Bem}
 A matrix that is a sum of squares is  positive semidefinite.
\end{Bem}

\begin{Lem}\label{lem:matrixposaffin}
 Let $M \in \Sym_n(A)$ be positive definite.
 Then there  are $s \in \Sigma A^2$ and  $Q \in \Mat_{m,n}(A)$
 such that $(1+s) M=Q^{t} Q$
 and $m \leq n p(A)$.
\end{Lem}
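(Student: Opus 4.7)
The plan is to proceed by induction on $n$, using Krivine's Positivstellensatz at each step to move the first diagonal entry into the form ``$1+$ (sum of squares)''.

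For the base case $n=1$ the matrix is a single scalar $a$ which is positive on $\Sper A$. Krivine's Positivstellensatz gives $s,t\in\Sigma A^2$ with $(1+s)a=1+t$. Since $1+t=1^2+t$ is itself a sum of squares, the definition of $p(A)$ lets me write $1+t=b_1^2+\cdots+b_{p(A)}^2$, and taking $Q=(b_1,\ldots,b_{p(A)})^t$ gives $Q^tQ=(1+s)M$ with $m=p(A)=1\cdot p(A)$.

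For the inductive step I would write
\[
M = \begin{pmatrix} a & v^t \\ v & M' \end{pmatrix}
\]
with $a=M_{11}\in A$, $v\in A^{n-1}$, $M'\in\Sym_{n-1}(A)$, and apply Krivine to the positive element $a$ to obtain $(1+s_0)a=1+t_0$. Setting $b=1+s_0$ and $a'=1+t_0$ (so $a'=ab$), a direct rank-one computation gives
\[
a' b M \;=\; uu^t \;+\; \begin{pmatrix} 0 & 0 \\ 0 & b^2(aM'-vv^t) \end{pmatrix}, \qquad u=\begin{pmatrix} a' \\ bv \end{pmatrix}\in A^n.
\]
The emergence of the factor $b^2$ on the Schur complement block is the decisive point: it is precisely what lets the scaling from Krivine commute with the recursive step that follows.

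Next I check that $aM'-vv^t$ is positive definite. Pulling back along any $\varphi\colon A\to R$, the ordinary real Schur complement theorem applied to $\varphi(M)$ shows $\varphi(M')-\varphi(v)\varphi(v)^t/\varphi(a)$ is positive definite over $R$, and multiplying by $\varphi(a)>0$ gives the claim. The induction hypothesis then supplies $s_1\in\Sigma A^2$ and $Q_1\in\Mat_{m_1,n-1}(A)$ with $m_1\leq (n-1)p(A)$ and $(1+s_1)(aM'-vv^t)=Q_1^tQ_1$. Multiplying the displayed decomposition by $(1+s_1)$, the lower right block becomes $b^2Q_1^tQ_1=(bQ_1)^t(bQ_1)$, while the rank-one summand becomes $(1+s_1)uu^t$. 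Using that $1+s_1$ is itself a sum of squares, hence a sum of at most $p(A)$ squares $c_1^2+\cdots+c_{p(A)}^2$, I stack the row vectors $c_iu^t$ into $R\in\Mat_{p(A),n}(A)$ so that $R^tR=(1+s_1)uu^t$. Stacking $R$ above $(0\mid bQ_1)$ yields $Q$ with $p(A)+m_1\leq n\cdot p(A)$ rows, and the accumulated scalar factor $(1+s_1)(1+t_0)(1+s_0)$ is of the form $1+s$ with $s\in\Sigma A^2$ because $\Sigma A^2$ is closed under sums and products.

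The main obstacle is the row-count bookkeeping. The Pythagoras number must be invoked exactly once per induction step and only on a rank-one piece, so that it contributes $p(A)$ additional rows rather than multiplying an existing row count. If one naively multiplies the recursive block by a general $(1+s_1)$ and then applies the Pythagoras number, the row count blows up to $p(A)\cdot m_1$ and overshoots $np(A)$. This is exactly what the factor $b^2$ in the Schur complement block prevents: it is already a perfect square, so no further Pythagoras reduction is needed on the recursive piece.
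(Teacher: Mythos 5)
Your proof is correct and takes a genuinely different route from the paper's. The paper diagonalizes the quadratic form of $M$ by Gram--Schmidt over the localization $A_S$ at the multiplicative set $S$ of all elements positive on $\Sper A$ (legitimate since one only divides by elements of $S$), clears denominators to obtain $s_1 M = Q_1^{t} D_1 Q_1$ over $A$ with $s_1 \in S$ and $D_1$ diagonal with entries in $S$, and then invokes Krivine once to bring $s_1$ and all diagonal entries of $D_1$ into $1+\Sigma A^2$; the Pythagoras number turns the $n$ diagonal entries into at most $n\,p(A)$ squares in one shot. Your argument instead never leaves $A$: it inducts on $n$, peeling off a rank-one piece via the Schur complement and applying Krivine only to the $(1,1)$ entry at each step. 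The point you single out --- that the scaling lands as the perfect square $b^2$ on the Schur-complement block, so the recursive piece needs no further Pythagoras reduction and only the rank-one summand costs $p(A)$ rows --- is exactly what makes the row count telescope to $n\,p(A)$, and your identity $a'bM = uu^{t} + \bigl(\begin{smallmatrix} 0 & 0 \\ 0 & b^2(aM'-vv^{t}) \end{smallmatrix}\bigr)$ checks out. The two proofs have comparable length and yield the same bound; the paper's is perhaps slicker because the localization sweeps the bookkeeping aside, while yours is more self-contained in that it avoids Gram--Schmidt over $A_S$ and produces the decomposition by an explicit recursion in $A$.
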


\begin{proof}
  The statement is trivial if $\Sper A = \emptyset$. In that case $-1$ is a sum of squares in $A$, thus we can take $s=-1$ and $Q=0$. Therefore, we can assume that $\Sper A \neq \emptyset$.

 Let $S \subseteq A$ be the multiplicative subset of all elements of $A$ that are positive on $\Sper A$.
 In $A_S$ we can diagonalize the quadratic form represented by $M$ using the Gram--Schmidt process:
  Because $M$ is positive definite, we only have to divide by elements that are positive
 on $\Sper A$. Thus we have a $n \times n$ matrix $Q_0$ and a diagonal matrix $D_0$  with entries in $A_S$ such that
 $M=Q_0^{t} D_0 Q_0$ holds in $A_S$. 
 Clearing denominators gives an $s_1 \in S$, a matrix $Q_1$ and a diagonal matrix $D_1$,
 both with entries in $A$,
 such that
 $s_1 M=Q_1^{t} D_1 Q_1$. 
 Note that the diagonal entries of $D_1$ are in $S$. Thus by Krivine's Positivstellensatz
 there is a sum of squares $s_2 \in \Sigma A^2$ such that $s_2$ multiplied with $s_1$
 and with any entry of $D_1$ is in $1 + \Sigma A^2$.
 Every diagonal entry of $s_2 D_1$ is a sum of at most $p(A)$ squares.
 This gives the claim.
\end{proof}

\begin{Def}\label{def:sigmasummands}
 For $M \in \Sym_n(A)$ 
 we denote by $\Sigma(M)$ the subset of $A^n$ consisting of all elements $v \in A^n$ such that 
 \[s M = v^{t} v + N  
 \]
for some $s \in \Sigma A^2$ and $N \in \Sym_n(A)$ that are both a sum of squares.
\end{Def}

\begin{Lem}\label{lem:sigmaclosed}
 The subset $\Sigma(M)$ is a submodule of $A^n$.
\end{Lem}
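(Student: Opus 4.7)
The plan is to verify the three defining conditions of a submodule directly: that $0 \in \Sigma(M)$, and that $\Sigma(M)$ is closed under addition and under multiplication by scalars from $A$.

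First I would dispatch the zero vector and scalar multiplication, which are routine. For $0 \in \Sigma(M)$, take $s=0$ and $N=0$, both trivially sums of squares, so that $sM = 0 = 0\cdot 0^t + 0$. For closure under scalar multiplication, given $v \in \Sigma(M)$ with witnessing data $sM = vv^t + N$ and given $a \in A$, I would multiply through by $a^2$ to obtain
\[ (a^2 s)\,M = (av)(av)^t + a^2 N. \]
Here $a^2 s \in \Sigma A^2$, and by the earlier remark that $a^2$ times a sum-of-squares matrix is again a sum of squares (if $N = Q^t Q$ then $a^2 N = (aQ)^t(aQ)$), this exhibits $av$ as an element of $\Sigma(M)$.

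The one substantive step is closure under addition. Given $v, w \in \Sigma(M)$ with $sM = vv^t + N$ and $s'M = ww^t + N'$, the naive approach of adding the two relations yields $(s+s')M = vv^t + ww^t + N + N'$, which is not of the required form because the cross terms $vw^t + wv^t$ from $(v+w)(v+w)^t$ are missing. My plan is to use the polarization-type identity
\[ (v+w)(v+w)^t + (v-w)(v-w)^t = 2vv^t + 2ww^t, \]
which holds in any commutative ring. Doubling the two defining relations and substituting gives
\[ 2(s+s')\,M = (v+w)(v+w)^t + (v-w)(v-w)^t + 2N + 2N'. \]
Since $2 = 1^2 + 1^2 \in \Sigma A^2$, the scalar $2(s+s')$ is a sum of squares, and the last three summands on the right form a sum-of-squares matrix by the closure properties noted in the preceding remarks. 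Hence $v+w \in \Sigma(M)$.

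The hard part, such as it is, is simply spotting the polarization identity; without it, the unwanted cross terms $vw^t + wv^t$ cannot be absorbed into the $N$-contribution, and closure under addition would not follow from the naive summation.
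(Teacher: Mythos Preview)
Your proposal is correct and follows essentially the same route as the paper: scalar closure is dispatched as immediate (via multiplication by $a^2$), and closure under addition is obtained from exactly the polarization identity $(v+w)(v+w)^t + (v-w)(v-w)^t = 2vv^t + 2ww^t$ that the paper invokes. The paper merely states the identity and leaves the verification to the reader, while you have spelled out the details.
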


\begin{proof}
It is clear that $A\cdot \Sigma(M) \subseteq \Sigma(M)$.
The following identity shows that $\Sigma(M)+\Sigma(M) \subseteq \Sigma(M)$:
 \[
  (v_1+v_2)^{t} (v_1+v_2) + (v_1-v_2)^{t}  (v_1-v_2) = 2 \cdot (v_1  v_1^{t} + v_2  v_2^{t}).\qedhere
 \]
\end{proof}

\begin{Bsp}
 Let $a \in A$ be positive  on $\Sper A$. Then $\Sigma(a)=A$ by Krivine's Positivstellensatz.
\end{Bsp}

\begin{Lem}\label{lem:sigma1}
 Let $M \in \Sym_n(A)$, $a \in A$, $v \in A^n$ and $b \in \Sigma(a)$. 
 If $av \in \Sigma(M)$, then $b^2 v \in \Sigma(M)$.
\end{Lem}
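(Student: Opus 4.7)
Plan: Unpack the two hypotheses. Since $b\in\Sigma(a)$, there exist $s_1\in\Sigma A^2$ and a sum of squares $n_1\in A$ with $s_1 a = b^2 + n_1$. Since $av\in\Sigma(M)$, there exist $s_2\in\Sigma A^2$ and a sum-of-squares matrix $N_2\in\Sym_n(A)$ with $s_2 M = a^2\,vv^{t} + N_2$, because $(av)(av)^{t}=a^2\,vv^{t}$. The target is to produce $s_3\in\Sigma A^2$ and a sum-of-squares matrix $N_3$ with $s_3 M = b^4\,vv^{t} + N_3$, since $(b^2 v)(b^2 v)^{t}=b^4\,vv^{t}$; this is exactly the witness needed for $b^2 v\in\Sigma(M)$.

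Main step: multiply the second identity by $s_1^2$ and substitute $a^2 s_1^2=(s_1 a)^2=(b^2+n_1)^2$ from the first:
\[
s_1^2 s_2\, M \;=\; (b^2+n_1)^2\,vv^{t} + s_1^2 N_2 \;=\; b^4\,vv^{t} \;+\; \bigl(2 b^2 n_1 + n_1^2\bigr)\,vv^{t} \;+\; s_1^2 N_2.
\]
Set $s_3 := s_1^2 s_2$ and $N_3 := (2 b^2 n_1 + n_1^2)\,vv^{t} + s_1^2 N_2$. The prefactor $s_3$ lies in $\Sigma A^2$ since the product of two sums of squares is manifestly a sum of squares in any commutative ring (multiply out).

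Verification that $N_3$ is a sum of squares, term by term. First, $n_1^2\,vv^{t} = (n_1 v)(n_1 v)^{t}$ is a single matrix square. Second, writing $n_1 = \sum_k c_k^2$ and using $2=1^2+1^2$, the term $2 b^2 n_1\,vv^{t}$ becomes a sum of outer products of the form $(b c_k v)(b c_k v)^{t}$. Third, $s_1^2 N_2$ is a sum of squares by the earlier remark that $r^2 N$ is a sum of squares whenever $N$ is. Closure of the sum-of-squares cone under addition (the remark on rank-one decompositions before Lemma~\ref{lem:matrixposaffin}) then yields that $N_3$ is a sum of squares, which establishes $b^2 v\in\Sigma(M)$.

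Obstacle: essentially none. The only conceptual observation is that squaring the relation $s_1 a = b^2 + n_1$ produces a factor $b^4$ rather than $b^2$, which is exactly why the statement concludes $b^2 v\in\Sigma(M)$ and not the stronger $bv\in\Sigma(M)$. Everything else is routine bookkeeping inside the cone of sum-of-squares matrices.
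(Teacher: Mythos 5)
Your proof is correct and follows the same approach as the paper: multiply $s_2 M = a^2\,vv^t + N_2$ by $s_1^2$, substitute $s_1 a = b^2 + n_1$, and expand $(b^2+n_1)^2$. The only difference is that you spell out explicitly why $(2b^2 n_1 + n_1^2)\,vv^t + s_1^2 N_2$ is a sum of squares, which the paper leaves implicit.
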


\begin{proof}
  There  are $s_1, r \in \Sigma A^2$
  such that $s_1  a = b^2 + r.$
 If $a  v \in \Sigma(M)$, then there are $s_2 \in \Sigma A^2$ and a sum of squares $N \in \Sym_n(A)$
 such that $s_2 M=a^2 v^{t}  v + N$.
                            Multiplying the second equation by $s_1^2$ and applying the first equation gives us
                            \begin{eqnarray*}
  s_1^2 s_2 M &=& (b^2+ r)^2  v^{t}  v + s_1^2 N \\
& =& (b^2 v)^{t} (b^2 v)+ (2b^2r+r^2) v^{t}  v+ s_1^2 N.
\end{eqnarray*}
Thus $b^2 v \in \Sigma(M)$.
\end{proof}

\begin{Kor}\label{cor:fromftoomega}
  Let $M \in \Sym_n(A)$ and $a \in (\Sigma(M): A^n)$.
 Then $b^2 \in (\Sigma(M): A^n)$ for all $b \in \Sigma(a)$.
\end{Kor}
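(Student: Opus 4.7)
The plan is to deduce this corollary as a direct consequence of Lemma~\ref{lem:sigma1}. The key observation is to unpack the colon ideal notation: by definition, $a \in (\Sigma(M) : A^n)$ means that $a v \in \Sigma(M)$ for every $v \in A^n$, and the statement to prove is that the analogous property holds for $b^2$ in place of $a$.

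First I would fix an arbitrary $v \in A^n$ and exploit the hypothesis $a \in (\Sigma(M):A^n)$ to conclude $a v \in \Sigma(M)$. Then I would apply Lemma~\ref{lem:sigma1} to the triple $(a, v, b)$: since $b \in \Sigma(a)$ by assumption and $a v \in \Sigma(M)$ by the previous step, the lemma yields $b^2 v \in \Sigma(M)$. Because $v$ was arbitrary, this shows $b^2 A^n \subseteq \Sigma(M)$, which is exactly $b^2 \in (\Sigma(M):A^n)$.

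There is essentially no obstacle here: once one recognizes that the corollary is the universally quantified version of Lemma~\ref{lem:sigma1} over all $v \in A^n$, the argument is a one-line deduction. The only minor point worth mentioning is that the colon $(\Sigma(M):A^n)$ makes sense as an ideal of $A$ precisely because $\Sigma(M)$ is an $A$-submodule of $A^n$ (Lemma~\ref{lem:sigmaclosed}), but this has already been established in the preceding material and requires no further work.
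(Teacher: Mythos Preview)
Your proposal is correct and matches the paper's own proof essentially line for line: the paper also fixes $v \in A^n$, uses $a \in (\Sigma(M):A^n)$ to get $av \in \Sigma(M)$, and then invokes Lemma~\ref{lem:sigma1} to conclude $b^2 v \in \Sigma(M)$.
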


\begin{proof}
 For all $v \in A^n$ we have $av \in \Sigma(M)$ and thus $b^2 v \in \Sigma(M)$ for all $b \in \Sigma(a)$
 by Lemma \ref{lem:sigma1}.
\end{proof}

\begin{Lem}\label{lem:norealpointinsigma}
 Let $M \in \Sym_n(A)$ be positive definite. The ideal
 $I=(\Sigma(M): A^n)$ is not contained in any real prime ideal.
\end{Lem}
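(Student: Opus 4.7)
The plan is to produce an explicit element of $I = (\Sigma(M) : A^n)$ which lies outside every real prime of $A$; the natural candidate, built from Lemma \ref{lem:matrixposaffin}, is $(1+s)\det(M)$.

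First, I would apply Lemma \ref{lem:matrixposaffin} to write $(1+s) M = Q^t Q$ with $s \in \Sigma A^2$ and $Q \in \Mat_{m,n}(A)$. Denoting by $q_1, \ldots, q_m \in A^n$ the transposes of the rows of $Q$, one has $Q^t Q = q_1 q_1^t + \ldots + q_m q_m^t$. Isolating the $k$-th summand yields $(1+s) M = q_k q_k^t + \sum_{j \neq k} q_j q_j^t$; since the residual matrix is manifestly a sum of squares, this decomposition witnesses $q_k \in \Sigma(M)$ for every $k$. By Lemma \ref{lem:sigmaclosed}, $\Sigma(M)$ is an $A$-submodule of $A^n$, so the whole image $Q^t(A^m)$ is contained in $\Sigma(M)$.

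Next, I would right-multiply $(1+s) M = Q^t Q$ by the adjugate $\adj(M)$. Using $M \adj(M) = \det(M) \Id_n$ gives
\[
(1+s)\det(M)\cdot \Id_n \;=\; Q^t \bigl( Q \adj(M) \bigr),
\]
whose $i$-th column reads $(1+s) \det(M) e_i = Q^t(Q \adj(M) e_i) \in Q^t(A^m) \subseteq \Sigma(M)$. Hence $(1+s)\det(M) \in I$.

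To conclude, I would verify that $(1+s)\det(M)$ avoids every real prime. Since $s$ is a sum of squares, $1+s$ is positive on all of $\Sper A$ and so lies in no real prime. For $\det(M)$: at any real prime $\mathfrak{p}$, composing $A \to \kappa(\mathfrak{p})$ with an ordered embedding of $\kappa(\mathfrak{p})$ into a real closed field sends $M$ to a positive definite matrix, whose determinant is nonzero, so $\det(M) \notin \mathfrak{p}$. Primality then forces $(1+s)\det(M) \notin \mathfrak{p}$, i.e., $I \not\subseteq \mathfrak{p}$. The only creative moment is spotting that multiplication by $\adj(M)$ converts the rank-restricted submodule $Q^t(A^m)$ into something containing a scalar multiple of the identity, which is precisely what is needed to move from a statement about $\Sigma(M) \subseteq A^n$ to a statement about the colon ideal $I \subseteq A$; everything else is a direct application of Lemmas \ref{lem:matrixposaffin} and \ref{lem:sigmaclosed} together with positive definiteness of $M$.
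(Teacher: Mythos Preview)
Your proof is correct. Both you and the paper start from Lemma \ref{lem:matrixposaffin} to write $(1+s)M=Q^{t}Q$ and observe that the rows of $Q$ lie in $\Sigma(M)$, hence $Q^{t}(A^{m})\subseteq\Sigma(M)$; the difference is in how one then extracts an element of the colon ideal $I$.

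The paper passes to the $n\times n$ submatrices $Q_J$ of $Q$, uses $Q_J^{t}\adj(Q_J)^{t}=q_J\Id_n$ to get each minor $q_J=\det Q_J$ into $I$, and then invokes Cauchy--Binet to see that $\sum_J q_J^{2}=\det((1+s)M)$ is positive on $\Sper A$, so the $q_J$ jointly avoid every real prime. You instead apply the adjugate directly to $M$: from $Q^{t}(Q\adj(M))=(1+s)\det(M)\Id_n$ you read off that the single element $(1+s)\det(M)$ already lies in $I$, and positive definiteness of $M$ makes it positive on $\Sper A$. Your route is more economical---it avoids Cauchy--Binet and the bookkeeping over subsets $J$---while the paper's argument has the mild advantage of exhibiting generators of an ideal inside $I$ expressed purely in terms of $Q$. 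Either way the conclusion is the same, and your adjugate-of-$M$ trick is a clean shortcut.
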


\begin{proof}
 By Lemma \ref{lem:matrixposaffin} there are
 $s \in \Sigma A^2$ and  $Q \in \Mat_{m,n}(A)$
 such that $(1+s) M=Q^{t} Q$. We can assume that $m \geq n$.
 Let $J \subseteq \{1, \ldots ,m\}$ be a subset with exactly $n$ elements.
 Let $Q_{J}$ be the submatrix of $Q$ with the rows from $J$ and 
 $q_{J}=\det(Q_{J})$ its determinant.
 By the Theorem of Cauchy--Binet %(cf. \cite[3.3.7]{FischLinAlg})
 we have that 
 \[\det((1+s) M)=\det(  Q^{t} Q)= \sum_{J \subseteq \{1, \ldots ,m\},\,\, |J|=n} q_J^2.\]
 Since $(1+s) M$ is positive definite, its determinant is positive on $\Sper A$.
 This means that the ideal generated by 
 the $q_J$'s is not contained in a real prime ideal.
 Thus it remains to show that $q_J \in I$ for every subset
 $J \subseteq \{1, \ldots ,m\}$ with $n$ elements.
 We consider the identity $Q_J^{t}  \adj(Q_J)^{t}=q_J  \textrm{I}_n$
 where $\adj(Q_J)$ is the adjugate matrix of $Q_J$. Now for any $v \in A^n$ we get by multiplying 
 $v$
 from the right to this identity that $q_J  v$ is an $A$-linear combination of the rows of $Q$
 which are in $\Sigma(M)$.
 This implies the claim.
\end{proof}

\begin{Lem}\label{lem:ifnorealppintthen}
 Let $I\subseteq A$ be an ideal that is not contained in any real prime ideal of $A$.
 There is a $f \in I$ with $\Sigma(f)=A$.
\end{Lem}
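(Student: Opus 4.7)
The plan is to reduce the claim $\Sigma(f) = A$ to positivity of $f$ on $\Sper A$ via Krivine's Positivstellensatz, and then to build such an $f$ out of finitely many elements of $I$ using quasi-compactness of the real spectrum.

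First I unpack Definition \ref{def:sigmasummands} in the scalar case $n = 1$: taking $v = 1 \in A$, the condition $1 \in \Sigma(f)$ means there exist $s, N \in \Sigma A^2$ with $sf = 1 + N$. By Krivine's Positivstellensatz (Theorem \ref{thm:krivine}), this is equivalent to $f$ being positive on $\Sper A$. Since $\Sigma(f)$ is an $A$-submodule of $A$ by Lemma \ref{lem:sigmaclosed}, the equality $\Sigma(f) = A$ is in turn equivalent to $1 \in \Sigma(f)$. So it suffices to exhibit an $f \in I$ that is positive at every point of $\Sper A$.

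Next I produce such an $f$ using quasi-compactness. The hypothesis that $I$ is contained in no real prime ideal means exactly that the set $V(I) = \{\alpha \in \Sper A : g(\alpha) = 0 \text{ for all } g \in I\}$ is empty, since the real prime ideals of $A$ are precisely the supports of points of $\Sper A$. The sets $V(g) = \{\alpha : g(\alpha) = 0\}$ for $g \in A$ are closed in the spectral topology (their complements are the basic opens $\{g > 0\} \cup \{-g > 0\}$), and $\Sper A$ is quasi-compact. Since $V(I) = \bigcap_{g \in I} V(g) = \emptyset$, finitely many elements $f_1, \ldots, f_k \in I$ already satisfy $\bigcap_{i=1}^k V(f_i) = \emptyset$. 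Set
\[
f = f_1^2 + \cdots + f_k^2 \in I.
\]
At each $\alpha \in \Sper A$ some $f_i$ is nonzero at $\alpha$, so $f_i(\alpha)^2 > 0$; the remaining summands are nonnegative, so $f(\alpha) > 0$.

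Applying Krivine's Positivstellensatz to this $f$ yields $s_0, t \in \Sigma A^2$ with $(1 + s_0) f = 1 + t$. Because $1 + s_0 = 1^2 + s_0$ is itself a sum of squares in $A$, this equality has precisely the shape $s \cdot f = 1^2 + t$ required by Definition \ref{def:sigmasummands} for $1 \in \Sigma(f)$, which together with the first paragraph gives $\Sigma(f) = A$. Conceptually the argument is short and essentially forced by the setup; the only genuine content is the translation between the scalar case of $\Sigma$ and Krivine's Positivstellensatz, together with the standard fact that $\Sper A$ is quasi-compact, so I do not anticipate a real obstacle.
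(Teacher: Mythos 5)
Your proof is correct, but it takes a genuinely different route from the paper. You work entirely inside $\Sper A$: you first reduce the claim to producing an $f \in I$ that is strictly positive on $\Sper A$, and then you use quasi-compactness of the real spectrum to pick finitely many $f_1,\ldots,f_k \in I$ whose common vanishing locus in $\Sper A$ is empty, finally taking $f = f_1^2 + \cdots + f_k^2$. The paper instead passes to the quotient $B = A/I$, observes that the hypothesis says exactly $\Sper B = \emptyset$, deduces $-1 \in \Sigma B^2$ from Krivine's Positivstellensatz applied in $B$ (a vacuous positivity statement), and lifts to get $1+s \in I$ for some $s \in \Sigma A^2$; then $\Sigma(1+s) = A$ is immediate from the definition, with no second appeal to Krivine needed. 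The paper's proof is shorter and avoids invoking quasi-compactness of $\Sper A$ and the Harrison topology altogether, replacing them with the purely algebraic passage to $A/I$; your proof, on the other hand, makes the geometric content explicit (a positive element of $I$ exists because $I$ has no real zeros) and cleanly isolates the equivalence $1 \in \Sigma(f) \Leftrightarrow f > 0$ on $\Sper A$, which is a nice conceptual reformulation of the scalar case of Definition \ref{def:sigmasummands}.
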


\begin{proof}
 Let $B=A/I$. Then $-1$ is positive on $\Sper B= \emptyset$, thus $-1 \in \Sigma B^2$ by Krivine's Positivstellensatz.
 Therefore, $1+s \in I$ for some $s \in \Sigma A^2$ and clearly $\Sigma(1+s)=A$.
\end{proof}

\begin{Kor}\label{cor:all}
  If  $M \in \Sym_n(A)$ is positive definite, then $(\Sigma(M):A^n)=A$ and thus $\Sigma(M)=A^n$.
\end{Kor}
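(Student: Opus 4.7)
The plan is to chain together the three preceding results, which have been arranged so that exactly this chaining does the job. Write $I := (\Sigma(M):A^n)$ for the ideal whose equality with $A$ we need to establish; once we show $1 \in I$, then every $v \in A^n$ satisfies $v = 1 \cdot v \in \Sigma(M)$, so $\Sigma(M) = A^n$ follows automatically.

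First I would invoke Lemma \ref{lem:norealpointinsigma}, which tells us that $I$ is contained in no real prime ideal of $A$, using the positive definiteness of $M$. Next, Lemma \ref{lem:ifnorealppintthen} converts this abstract non-containment into a concrete element: there exists $f \in I$ with $\Sigma(f) = A$. The final ingredient is Corollary \ref{cor:fromftoomega}: since $f \in I$, we conclude that $b^2 \in I$ for every $b \in \Sigma(f)$.

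But $\Sigma(f) = A$, so this applies in particular to $b = 1$, yielding $1 = 1^2 \in I$. Hence $I = A$, which is the first assertion, and $\Sigma(M) = A^n$ follows as noted.

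There is essentially no obstacle here, since all the real work is already carried in Lemmas \ref{lem:matrixposaffin}, \ref{lem:sigma1}, \ref{lem:norealpointinsigma} and \ref{lem:ifnorealppintthen}; the present corollary is the clean packaging step. The only thing to be careful about is recognising that the statement ``$b^2 \in I$ for all $b \in \Sigma(f)$'' in Corollary \ref{cor:fromftoomega} is strong enough: one does not need to cover every $b \in A$ directly, only to use $1 \in \Sigma(f) = A$.
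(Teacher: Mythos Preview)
Your proof is correct and matches the paper's own argument essentially line for line: apply Lemma~\ref{lem:norealpointinsigma}, then Lemma~\ref{lem:ifnorealppintthen}, then Corollary~\ref{cor:fromftoomega} with $a=f$ and $b=1$. The only cosmetic difference is that the paper phrases the conclusion of Lemma~\ref{lem:ifnorealppintthen} as $1\in\Sigma(f)$ rather than $\Sigma(f)=A$, which is of course equivalent since $\Sigma(f)$ is an $A$-submodule.
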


\begin{proof}
 By Lemma \ref{lem:norealpointinsigma} the ideal $I=(\Sigma(M): A^n)$ is not contained in a real prime ideal.
 Thus by Lemma \ref{lem:ifnorealppintthen} 
 there is a $f \in I$ with $1 \in \Sigma(f)$.
 Thus by Corollary \ref{cor:fromftoomega} we have $1 \in (\Sigma(M):A^n)$, i.e. $(\Sigma(M):A^n)=A$.
\end{proof}

\begin{Thm}\label{cor:positivstellensatz}
 For every positive definite $M \in \Sym_n(A)$ there are $s \in \Sigma A^2$ and 
 $Q \in \Mat_{m,n}(A)$ such that $s M=\Id_n+Q^{t} Q$ and $m \leq 2 n p(A)$.
\end{Thm}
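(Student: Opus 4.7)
The plan is to reduce the theorem to Lemma \ref{lem:matrixposaffin} applied to a shifted matrix. The strategy has three steps: first, produce an $s_* \in \Sigma A^2$ such that $s_* M - \Id_n$ is positive definite; second, invoke Lemma \ref{lem:matrixposaffin} on this shifted matrix to obtain a sum-of-squares representation with at most $np(A)$ rows; third, rearrange and absorb the resulting factor using the Pythagoras bound.

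I first dispose of the degenerate case $\Sper A = \emptyset$, where $-1 \in \Sigma A^2$: writing $-1 = \sum_{k=1}^{p(A)} c_k^2$ and vertically stacking the blocks $c_k \Id_n$ produces some $Q \in \Mat_{np(A), n}(A)$ with $Q^t Q = -\Id_n$, so $0 \cdot M = \Id_n + Q^t Q$ does the job with $s = 0$. Assume henceforth $\Sper A \neq \emptyset$. Since $M$ is positive definite, both $\det(M)$ and $\tr(\adj M)$ (which equals the sum of the $(n-1)$-dimensional principal minors of $M$, each positive) are positive on $\Sper A$. Applying Theorem \ref{thm:krivine} to $\det(M)$ yields $u, v \in \Sigma A^2$ with $u \det(M) = 1 + v$. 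I set
\[
s_* \;=\; u\bigl(1 + \tr(\adj M)^2\bigr) \;\in\; \Sigma A^2.
\]
Since $\tr(M^{-1}) \geq 1/\lambda_{\min}(M)$ pointwise on $\Sper A$, the required $s_* M \succ \Id_n$ follows from $s_* \det(M) > \tr(\adj M)$. Writing $a = \tr(\adj M)$, one computes
\[
s_* \det(M) - a \;=\; (1+v)(1+a^2) - a \;=\; (a - \tfrac{1}{2})^2 + \tfrac{3}{4} + v(1+a^2) \;>\; 0
\]
on $\Sper A$, as desired.

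Applying Lemma \ref{lem:matrixposaffin} to the positive definite matrix $s_* M - \Id_n$ yields $s' \in \Sigma A^2$ and $R \in \Mat_{m_1, n}(A)$ with $m_1 \leq n\,p(A)$ such that $(1+s')(s_* M - \Id_n) = R^t R$, which rearranges to $(1+s') s_* M = \Id_n + s' \Id_n + R^t R$. Writing $s' = \sum_{k=1}^{p(A)} c_k^2$ (possible by definition of $p(A)$) and vertically stacking the blocks $c_k \Id_n$ gives $R_0 \in \Mat_{np(A), n}(A)$ with $R_0^t R_0 = s' \Id_n$. Setting $s = (1+s') s_* \in \Sigma A^2$ and $Q = \begin{pmatrix} R_0 \\ R \end{pmatrix} \in \Mat_{m, n}(A)$ with $m = np(A) + m_1 \leq 2np(A)$, one concludes $sM = \Id_n + Q^t Q$.

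The principal obstacle is Step 1: a direct application of Krivine's Positivstellensatz to $\det(M)$ only controls the product of the eigenvalues, not their minimum. The role of the factor $1 + \tr(\adj M)^2$ is to convert the bound on $\det(M)$ into a pointwise bound on $s_*$ dominating $\tr(M^{-1})$, and hence $1/\lambda_{\min}(M)$, via the elementary identity $a^2 - a + 1 = (a - \tfrac{1}{2})^2 + \tfrac{3}{4}$. Steps 2 and 3 are then routine bookkeeping on top of Lemma \ref{lem:matrixposaffin}.
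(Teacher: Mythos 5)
Your proof is correct, but it reaches the conclusion by a genuinely different route from the paper. The paper's proof of this theorem is the payoff of the whole apparatus of $\Sigma(M)$ built up in Definition \ref{def:sigmasummands} through Corollary \ref{cor:all}: it shows that $\Sigma(M)=A^n$ for $M$ positive definite (via the conductor ideal $(\Sigma(M):A^n)$, a Cauchy--Binet computation, and the observation that ideals not contained in any real prime contain an element $f$ with $\Sigma(f)=A$), deduces from this a relation $rM=\Id_n+N$ with $N$ a sum of squares, and then invokes Lemma \ref{lem:matrixposaffin} to bound the size. You bypass that entire machinery: you apply Krivine's Positivstellensatz once, to the scalar $\det M$, and then amplify the resulting multiplier by the factor $1+\tr(\adj M)^2$ so that $s_*M-\Id_n$ itself becomes positive definite (the point being the elementary inequality $(1+v)(1+a^2)-a=(a-\tfrac12)^2+\tfrac34+v(1+a^2)>0$ combined with $\tr(M^{-1})\ge 1/\lambda_{\min}(M)$ and $\tr(\adj M)=\det(M)\tr(M^{-1})$, all valid over any real closed field by transfer). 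Applying Lemma \ref{lem:matrixposaffin} directly to the positive definite shift $s_*M-\Id_n$ then gives the representation with the same bound $m\le 2np(A)$, after stacking $s'\Id_n$ as $np(A)$ further square blocks.

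Your verifications are sound: $s_*$ and $s=(1+s')s_*$ lie in $\Sigma A^2$ since sums of squares are closed under products and addition of $1$; the bookkeeping $m=np(A)+m_1\le 2np(A)$ is right; and your separate treatment of $\Sper A=\emptyset$ (where $-1\in\Sigma A^2$ and hence is a sum of $p(A)$ squares, forcing $\Id_n+Q^tQ=0$ for a stacked $Q$ of $np(A)$ rows) matches the paper's remark that the statement degenerates in that case. One small advantage of your route worth noting: you always feed a \emph{positive definite} matrix into Lemma \ref{lem:matrixposaffin}, which is exactly the hypothesis that lemma needs for its Gram--Schmidt argument; the paper's proof applies the lemma to the sum-of-squares matrix $N$, which is only a priori positive semidefinite, so your variant is somewhat cleaner on this point. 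The trade-off is that the paper's $\Sigma(M)$-machinery is purely algebraic and parallels the graded Positivstellensatz of the cited reference, whereas yours leans on eigenvalue and trace inequalities over real closed fields; both are perfectly rigorous, but they are different in flavor.
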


\begin{proof}
 There are $r \in \Sigma A^2$ and 
 a sum of squares $N \in \Sym_n(A)$ such that $r M=\Id_n+N$ by the preceding corollary. By Lemma \ref{lem:matrixposaffin}
 there are $r' \in \Sigma A^2$ and  $Q' \in \Mat_{m',n}(A)$
 such that $(1+r') N=Q'^{t} Q'$
 and $m' \leq n p(A)$. Thus $r (1+ r') M=\Id_n+r' \Id_n +Q'^{t} Q'$.
 Since $r'$ is a sum of at most $p(A)$ squares, $r' \Id_n +Q'^{t} Q'$ is a sum of at most $2n p(A)$ squares.
\end{proof}

\begin{Bem}
 Like Lemma \ref{lem:matrixposaffin} the statement of Theorem \ref{cor:positivstellensatz} becomes trivial if $\Sper A$ is empty because in that case $-\Id_n$ is a sum of squares.
\end{Bem}

\section{Strict real zero polynomials}
% In this section we want to study the minimal size of a matrix that has  a given element  $\theta \in \fE(A)$ as an eigenvalue.
% The goal of this section is to show that this size grows linearly in the degree of $\theta$ over $A$ when $A$ is the ring of integers
% in an algebraic number field. From now on, let $A$ be an integral domain.
\begin{Thm}\label{lem:detrepmonic}
 Let $f \in A[X]$ be a strict real zero polynomial of degree $n$. Then $f$ divides the characteristic polynomial of a symmetric matrix $M \in \Sym_r(A)$ with $r \leq (2p(A)+1)n$.
\end{Thm}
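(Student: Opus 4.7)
The plan is to build $M$ explicitly as a symmetric block matrix of size $(n+m)\times(n+m)$ with $m \le 2n\,p(A)$, in which the transpose of the companion matrix $C$ of $f$ appears as the restriction to an invariant direct summand. The three inputs are the B\'ezout matrix $B = B(f,f')$, the companion matrix $C$, and the matrix Positivstellensatz of Theorem~\ref{cor:positivstellensatz}.

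Since $f$ is a strict real zero polynomial, the remark at the end of Section~2 tells us that $B$ is positive definite, and we always have the identity $CB = BC^t$. Applying Theorem~\ref{cor:positivstellensatz} to $B$ produces $s\in\Sigma A^2$ and $Q\in\Mat_{m,n}(A)$ with $m\le 2n\,p(A)$ and $sB = \Id_n + Q^tQ$. Set
\[
R \;=\; \begin{pmatrix}\Id_n\\ Q\end{pmatrix}\in\Mat_{n+m,n}(A);
\]
it has a left inverse (the projection onto the first $n$ coordinates), so $\im(R)$ is a free direct summand of $A^{n+m}$ of rank $n$, with explicit complement $\{0\}\oplus A^m$. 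The goal is a symmetric $M\in\Sym_{n+m}(A)$ satisfying $MR = RC^t$. Any such $M$ preserves $\im(R)$ and acts on it, via $R$, as $C^t$; writing $M$ in block upper-triangular form with respect to the above complement then shows that the characteristic polynomial of $M$ is divisible by that of $C^t$, namely $f$.

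Writing $M = \begin{pmatrix} M_1 & M_2 \\ M_2^t & M_3 \end{pmatrix}$, the equation $MR = RC^t$ becomes $M_1+M_2Q = C^t$ and $M_2^t + M_3 Q = QC^t$. I take the most natural solution $M_3 = 0$, $M_2 = CQ^t$, $M_1 = C^t - CQ^tQ$. The only non-routine verification is that $M_1$ is symmetric; using $Q^tQ = sB - \Id_n$ and $CB = BC^t$,
\[
 CQ^tQ - Q^tQC^t \;=\; s(CB - BC^t) + (C^t - C) \;=\; C^t - C,
\]
which is exactly the required identity $M_1 = M_1^t$. Given this, $M$ is symmetric, a direct multiplication confirms $MR = RC^t$, and the size bound reads $n+m \le (2p(A)+1)n$.

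The only conceptual step is the symmetry identity for $M_1$, and it is essentially forced by the defining relation $CB = BC^t$ of the B\'ezout matrix together with the decomposition $sB = \Id_n + Q^tQ$ supplied by the Positivstellensatz; the appearance of $C^t$ rather than $C$ on the right-hand side of the intertwining is what makes the cross-term $s(CB - BC^t)$ vanish. Everything else---the block construction, the invariance of $\im(R)$, and the divisibility of the characteristic polynomial along an $M$-invariant free direct summand---is purely formal and works over any commutative ring.
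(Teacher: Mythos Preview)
Your proof is correct and essentially identical to the paper's: both apply Theorem~\ref{cor:positivstellensatz} to the positive definite B\'ezout matrix $B(f,f')$, construct the very same symmetric matrix (your $M_1=C^{t}-CQ^{t}Q$ equals the paper's top-left block $C-Q^{t}QC^{t}$ by the symmetry you verified), and deduce divisibility of the characteristic polynomial from the identity $CB=BC^{t}$. The only cosmetic difference is that the paper packages the invariant-subspace argument as an explicit conjugation $M=Q'^{-1}M'Q'$ by the unipotent matrix $Q'=\begin{pmatrix}\Id_n & Q^{t}\\ 0 & \Id_m\end{pmatrix}$, whereas you phrase it via the intertwining relation $MR=RC^{t}$.
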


\begin{proof}
 Let $C$ be the companion matrix of $f$ and let $B=B(f,f')$ which is positive definite. By Theorem \ref{cor:positivstellensatz} there are $s \in \Sigma A^2$ and $Q \in \Mat_{m,n}(A)$ such that $s B=I+Q^{t} Q$ and $m \leq 2 n p(A)$. Consider the matrices \[Q'=
\begin{pmatrix}
                                     \Id_n & Q^{t} \\ 0 & \Id_m
                                    \end{pmatrix} \textnormal{ and } M'= \begin{pmatrix}
                                     C & 0 \\ Q C^{t} & -Q  C^{t} Q^{t}
                                    \end{pmatrix}.
                     \]
The characteristic polynomial of $M'$ is divisible by $f$ and $Q'$ is invertible over $A$. Thus the characteristic polynomial of $M=Q'^{-1} M' Q'$ is also divisible by $f$. It is \[
              M=\begin{pmatrix}
                                     C-Q^{t} Q C^{t} & C Q^{t} \\ Q C^{t} & 0
                                    \end{pmatrix}
             \]
and it follows from $B C^{t}= C  B$ that this matrix is symmetric. The size of $M$ is $m+n\leq (2p(A)+1)n$.
\end{proof}

\begin{Bem}
 The statement of Theorem \ref{lem:detrepmonic} is in general not true if we only assume $f$ to be a real zero polynomial. In \cite[Ex. 1.5]{Bass94} they give an example of a noetherian factorial domain $A$ and an element $a\in A$ such that $f=X^2-a$ is a real zero polynomial that does not divide the characteristic polynomial of any symmetric matrix over $A$.
\end{Bem}

\begin{Kor}
 Let $A$ be an integral domain and let $\theta\in A^{\rm Re}$ be a root of a strict real zero polynomial of degree $n$. Then $\theta$ is an eigenvalue of a suitable $M \in \Sym_r(A)$ with $r \leq (2p(A)+1)n$.
\end{Kor}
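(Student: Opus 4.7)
The plan is to derive this corollary directly from Theorem~\ref{lem:detrepmonic}. By hypothesis, there exists a strict real zero polynomial $f\in A[X]$ of degree $n$ with $f(\theta)=0$. The first step is to feed this particular $f$ into Theorem~\ref{lem:detrepmonic}, which produces a symmetric matrix $M\in\Sym_r(A)$ with $r\leq (2p(A)+1)n$ whose characteristic polynomial $\chi_M$ is divisible by $f$ in $A[X]$.

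To finish, I will translate the divisibility $f\mid\chi_M$ into the statement that $\theta$ is an eigenvalue of $M$. Writing $\chi_M = q\cdot f$ with $q\in A[X]$ and evaluating at $\theta\in\bar{A}\subseteq\bar{K}$, where $\bar{K}$ denotes an algebraic closure of $K=\Quot(A)$, one obtains $\chi_M(\theta)=q(\theta)\cdot f(\theta)=0$. Since $A$ is an integral domain, $M$ can be viewed as a matrix over $K$, and after extending scalars to $\bar{K}$ its eigenvalues on $\bar{K}^r$ are precisely the roots of $\chi_M$ in $\bar{K}$. In particular, $\theta$ is one of them. The bound on the size of $M$ is inherited verbatim from Theorem~\ref{lem:detrepmonic}.

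There is essentially no obstacle at this final step: all the substantive work sits in Theorem~\ref{lem:detrepmonic}, and behind it in the matrix Positivstellensatz (Theorem~\ref{cor:positivstellensatz}) together with the B\'ezout matrix construction. The only point that deserves a brief mention, rather than genuine argument, is that the notion of \emph{eigenvalue} used in the conclusion agrees with \emph{root of the characteristic polynomial} even for $\theta\in\bar{A}\setminus A$; this is automatic once we pass to $\bar{K}$, a step which is legitimate precisely because $A$ is an integral domain. Hence the corollary follows with no further computation.
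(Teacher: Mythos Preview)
Your argument is correct and matches the paper's approach: the corollary is stated without proof there, being immediate from Theorem~\ref{lem:detrepmonic}. The extra justification you give for why a root of $\chi_M$ is an eigenvalue of $M$ is fine but more than the paper deems necessary.
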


As a byproduct we get that every integral element over a ring with empty real spectrum is an eigenvalue of a suitable symmetric matrix over this ring. This has already been shown in \cite[Thm. 3.8]{Bass94}.

\begin{Kor}\label{cor:sperleer}
 If $\Sper A = \emptyset$, then every element $\theta\in \bar{A}$ is an eigenvalue of a suitable $M \in \Sym_r(A)$.
\end{Kor}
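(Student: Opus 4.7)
The plan is to deduce this corollary as an immediate consequence of Theorem \ref{lem:detrepmonic}, after making one simple observation: when $\Sper A=\emptyset$, the condition of being a strict real zero polynomial is vacuous.

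First I would unpack the assumption that $\theta\in\bar{A}$. By definition of integral closure, there exists a monic polynomial $f\in A[X]$ of some degree $n$ such that $f(\theta)=0$; pick such an $f$.

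Next I would verify that $f$ is automatically a strict real zero polynomial. For this, note that any homomorphism $\varphi\colon A\to R$ to a real closed field $R$ would give a prime ideal $\ker(\varphi)\subseteq A$ with formally real residue field (via the factorization $A\twoheadrightarrow A/\ker(\varphi)\hookrightarrow R$), and hence a point of $\Sper A$. Since $\Sper A=\emptyset$, no such $\varphi$ exists, so the universal condition in the definition of \emph{strict real zero polynomial} is vacuously satisfied. This is exactly the Example recorded earlier in the preliminaries.

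Finally, I would apply Theorem \ref{lem:detrepmonic} to $f$: it yields a symmetric matrix $M\in\Sym_r(A)$ whose characteristic polynomial is divisible by $f$. Since $\theta$ is a root of $f$, it is a root of the characteristic polynomial of $M$, hence an eigenvalue of $M$. There is no real obstacle here; the entire content lies in the one-line observation that emptiness of $\Sper A$ trivializes the strictness hypothesis, so that the main theorem applies unconditionally to every monic polynomial annihilating $\theta$.
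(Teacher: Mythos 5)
Your proof is correct and follows exactly the route the paper intends: it invokes the earlier Example that when $\Sper A = \emptyset$ every monic polynomial is vacuously a strict real zero polynomial, then applies Theorem \ref{lem:detrepmonic} to any monic annihilator of $\theta$. The paper leaves this corollary without an explicit proof ("As a byproduct we get\ldots"), and what you wrote is precisely the argument being gestured at.
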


\begin{Kor}
 Let $A$ be an integral domain with field of fractions $K$. Assume that no non-zero prime ideal of $A$ is real and let $f \in A[X]$ be a real zero polynomial of degree $n$. Then $f$ divides the characteristic polynomial of a symmetric matrix $M \in \Sym_r(A)$ with $r \leq (2p(A)+1)n$.
\end{Kor}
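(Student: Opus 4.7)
\noindent\emph{Plan.} The argument splits into two cases depending on $\Sper A$. If $\Sper A = \emptyset$, every monic polynomial over $A$ is vacuously a strict real zero polynomial, and Theorem~\ref{lem:detrepmonic} applies directly to $f$, giving the stated bound. Otherwise $\Sper A \neq \emptyset$, and the hypothesis then forces the zero ideal to be the only real prime of $A$; in particular $K$ is formally real and every ring homomorphism from $A$ to a real closed field factors through $A \hookrightarrow K$. Consequently, a polynomial over $A$ is (strict) real zero iff it is (strict) real zero over $K$, and $f$ is strict real zero iff $\det B(f,f') \neq 0$ in $A$. When this determinant is nonzero, $f$ is already strict real zero and Theorem~\ref{lem:detrepmonic} applies directly.

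The substantive case is $\det B(f,f') = 0$, so $f$ has a repeated root in $\bar K$. Write $f = \prod_i (X - \theta_i)^{m_i}$ with the $\theta_i \in \bar K$ distinct and $m_i \geq 1$, and for each $\ell = 1, \dots, \max_i m_i$ define the level-$\ell$ squarefree factor
\[
g_\ell(X) := \prod_{i \,:\, m_i \geq \ell}(X - \theta_i),
\]
so that $f = \prod_\ell g_\ell$ and $\sum_\ell \deg g_\ell = \sum_i m_i = n$. Each $g_\ell$ is monic with pairwise distinct totally real roots and is Galois-stable (since Galois permutes the $\theta_i$ preserving multiplicities), hence $g_\ell \in K[X]$; its coefficients are symmetric functions in the $\theta_i$, so integral over $A$ and thus in the integral closure $A' \subseteq K$ of $A$. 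Granting that each $g_\ell$ actually lies in $A[X]$, each $g_\ell$ is a strict real zero polynomial over $A$, and Theorem~\ref{lem:detrepmonic} produces $M_\ell \in \Sym_{r_\ell}(A)$ whose characteristic polynomial is divisible by $g_\ell$ with $r_\ell \leq (2p(A)+1)\deg g_\ell$. The block-diagonal direct sum $M := \bigoplus_\ell M_\ell$ is symmetric over $A$, its characteristic polynomial $\prod_\ell \chi_{M_\ell}$ is divisible by $\prod_\ell g_\ell = f$, and its size is $\sum_\ell r_\ell \leq (2p(A)+1)\sum_\ell \deg g_\ell = (2p(A)+1)n$.

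The main obstacle is justifying that each $g_\ell$ lies in $A[X]$ rather than merely in $A'[X]$. A natural route is to first establish the conclusion over $A'$ (which inherits the hypothesis, as any real prime of $A'$ contracts to a real prime of $A$ and must therefore be the zero ideal) and then descend the resulting matrix to $A$ by clearing denominators. Lemma~\ref{lem:ifnorealppintthen} is the tool for controlling the denominators, since any nonzero element of $A$ lies in no real prime of $A$; the remaining delicate point is comparing $p(A')$ with $p(A)$ so as to retain the stated bound.
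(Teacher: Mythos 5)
Your overall strategy agrees with the paper's: split $f$ into squarefree pieces that are strict real zero polynomials, apply Theorem~\ref{lem:detrepmonic} to each piece, and take a block-diagonal direct sum. The decomposition you use is different, though. The paper simply factors $f$ into monic irreducible factors over $K$ (with multiplicity); over a field of characteristic zero each irreducible factor is automatically squarefree, so under the hypothesis it is strict real zero, and the degrees with multiplicity add up to $n$. You instead build the layer polynomials $g_\ell = \prod_{m_i \geq \ell}(X-\theta_i)$; this also gives squarefree factors with $\sum_\ell \deg g_\ell = n$, so the bound is preserved. Both decompositions work; the paper's is a one-liner, yours is more explicit. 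Your preliminary reductions (the $\Sper A=\emptyset$ case and the $\det B(f,f')\neq 0$ case) are fine and essentially redundant once the factorization argument is in place, since the paper's argument covers all those cases uniformly.

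Where your writeup is not complete is exactly the point you flag yourself: the factors $g_\ell$ (and likewise the paper's irreducible factors over $K$) a priori have coefficients only in the integral closure $A'$ of $A$ in $K$, and Theorem~\ref{lem:detrepmonic} needs them in $A[X]$. The paper's proof does not address this either, so you should not regard noticing it as a defect; but your proposed repair does not actually work as sketched. Applying the result over $A'$ gives $M \in \Sym_r(A')$, and ``clearing denominators'' has no clean effect: multiplying $M$ by a nonzero $d\in A$ rescales the eigenvalues to $d\theta_i$, while conjugating by a non-orthogonal diagonal matrix destroys symmetry, so neither yields a symmetric matrix over $A$ with $\theta_i$ among its eigenvalues. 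Moreover Lemma~\ref{lem:ifnorealppintthen} controls ideals modulo real primes, not denominators of matrix entries, and you leave the comparison of $p(A')$ with $p(A)$ entirely open, so even if a descent were found the size bound would not be guaranteed. As written, the final case of your argument is therefore a genuine gap; to match the paper you should either restrict to the case where the monic factors of $f$ over $K$ lie in $A[X]$ (e.g.\ $A$ integrally closed, which covers the number-ring application) or supply an actual descent argument.
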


\begin{proof}
 The assumption on $A$ implies that every real zero polynomial that is irreducible over $K$ is a strict real zero polynomial. If $f$ is reducible, we can take the direct sum of the matrices that we get for each irreducible factor.
\end{proof}

\begin{Kor}\label{cor:bounds}
 Let $A$ be the ring of integers of an algebraic number field. Every  $\theta\in A^{\rm Re}$ of degree $n$ is an eigenvalue of a suitable $M \in \Sym_r(A)$ with $r \leq (2p(A)+1)n$.
\end{Kor}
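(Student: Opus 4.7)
The plan is to deduce this corollary from the main theorem (Theorem \ref{lem:detrepmonic}) by showing that the minimal polynomial $f \in A[X]$ of $\theta$ over $A$ is a strict real zero polynomial of degree $n$. Once that is established, the theorem produces a symmetric matrix $M \in \Sym_r(A)$ with $r \leq (2p(A)+1)n$ whose characteristic polynomial is divisible by $f$, and since $\theta$ is a root of $f$, it is then a root of the characteristic polynomial of $M$, i.e. an eigenvalue.

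First I would observe that, because $A$ is integrally closed (being a Dedekind domain), the minimal polynomial of $\theta$ over $A$ exists in $A[X]$ and coincides with its minimal polynomial over $K = \Quot(A)$; in particular it has degree $n$ and is irreducible over $K$. Since $K$ has characteristic zero, $f$ is automatically separable, so its zeros in $\bar K$ are all simple.

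Next I would analyse homomorphisms $\varphi: A \to R$ into a real closed field. The key structural input about $A$ is that every nonzero prime ideal is maximal with finite residue field. A finite field is never formally real (for example $-1$ is a sum of squares), hence does not embed into any real closed field. Consequently the kernel of $\varphi$ is forced to be $(0)$, so $\varphi$ is injective and extends uniquely to an embedding $K \hookrightarrow R$. The image polynomial $\varphi(f) \in R[X]$ therefore has the same degree $n$ and the same separability as $f$, and its roots in $\bar R = R[\ii]$ are in bijection with those of $f$ in $\bar K$. Since $\theta$ is totally real over $A$, all of these roots must in fact lie in $R$; otherwise a quadratic $R$-irreducible factor of $\varphi(f)$ would give a non-real prime of $A[\theta] \otimes_A R \cong R[X]/(f)$. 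Combining this with separability, every zero of $\varphi(f)$ is a simple zero in $R$, so $f$ is a strict real zero polynomial.

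The only genuinely nontrivial ingredient is the observation that the residue fields of $A$ at maximal ideals are finite and hence not formally real, which kills the possibility of any nontrivial reductions $\varphi$; everything else reduces to separability in characteristic zero together with the definition of total reality. With $f$ identified as a strict real zero polynomial of degree $n$, Theorem \ref{lem:detrepmonic} yields the desired bound $r \leq (2p(A)+1)n$, completing the proof.
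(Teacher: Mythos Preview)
Your argument is correct and is essentially the same as the paper's: the key observation in both is that nonzero primes of $A$ have finite (hence non-formally-real) residue fields, forcing every $\varphi:A\to R$ to be injective, after which total reality of $\theta$ and separability in characteristic~$0$ make the minimal polynomial a strict real zero polynomial. The only cosmetic difference is that the paper packages the ``no nonzero real prime'' step into the preceding corollary and then cites it, whereas you unfold that step directly.
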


\begin{proof}
%  Since the only real prime ideal of $A$ is the zero ideal, every real zero polynomial that is irreducible over $K$ is a strict real zero polynomial.
%  In particular, the minimal polynomial of $\theta$ is a strict real zero polynomial.
Apply the preceding corollary to the minimal polynomial of $\theta$.
\end{proof}

\begin{Bem}
 The Pythagoras number of the ring of integers $A$ of an algebraic number field is always finite \cite{Scharlau}.
 Thus, for every such $A$ there is a positive integer $c$ such that
 every $\theta \in A^{\rm Re}$ is an eigenvalue of a symmetric matrix over $A$ of size at most $c \cdot \deg_A(\theta)$.
 One reason why we are not able to proof such a bound for arbitrary Dedekind domains is because in general they do not  have
 finite Pythagoras number (even if their field of fractions has finite Pythagoras number, see \cite[\S 4]{cdlr82}). Another reason is that in general not every real zero polynomial is a strict real zero polynomial.
\end{Bem}

\bigskip

 \noindent \textbf{Acknowledgements.}
I would like to thank Christoph Hanselka for drawing my attention to this problem and for helpful discussions
and remarks, especially on the proof of Lemma \ref{lem:matrixposaffin}.
I also thank the Studienstiftung des deutschen Volkes for their financial and ideal support.

\bibliographystyle{alpha}
\bibliography{eigs}
 \end{document}